\setlist[enumerate]{parsep=0pt plus 4pt,topsep=0pt plus 4pt}
\newcommand\excise[1]{}
\renewcommand\comment[1]{{$\star$\sf\textbf{#1}$\star$}}
\newtheorem{thm}{Theorem}[section]
\newtheorem{lemma}[thm]{Lemma}
\newtheorem{cor}[thm]{Corollary}
\newtheorem{prop}[thm]{Proposition}
\theoremstyle{definition}
\newtheorem{example}[thm]{Example}
\newtheorem{remark}[thm]{Remark}
\newtheorem{defn}[thm]{Definition}
\numberwithin{equation}{section}
\newcounter{separated-sec}
\newcounter{separated}
\newcounter{sylvan-sec}
\newcommand{\Ring}[1]{\ensuremath{\mathbb{#1}}}
\renewcommand\>{\rangle}
\newcommand\<{\langle}
\newcommand\0{\mathbf{0}}
\newcommand\1{\mathbbm{1}}
\newcommand\CC{{\widetilde C}{}{}}
\newcommand\HH{{\widetilde H}{}{}}
\newcommand\NN{\Ring{N}}
\newcommand\ZZ{\Ring{Z}}
\newcommand\bb{{\mathbf b}}
\newcommand\cc{{\mathbf c}}
\newcommand\kk{\Bbbk}
\newcommand\xx{{\mathbf x}}
\newcommand\del{\partial}
\renewcommand\aa{{\mathbf a}}
\renewcommand\phi{\varphi}
\newcommand\uu{\mathbf{u}}
\newcommand\vv{\mathbf{v}}
\newcommand\aalpha{{\bm{\alpha}}}
\newcommand\bbeta{{\bm{\beta}}}
\newcommand\from{\leftarrow}
\newcommand\into{\hookrightarrow}
\newcommand\onto{\twoheadrightarrow}
\newcommand\spot{{\hbox{\raisebox{1pt}{\tiny$\scriptscriptstyle{\bullet}$}}}}
\newcommand\nothing{\varnothing}
\newcommand\filleftmap{\mathord\leftarrow \mkern-6mu
	\cleaders\hbox{$\mkern-2mu \mathord- \mkern-2mu$}\hfill
	\mkern-6mu \mathord-}
\renewcommand\epsilon{\varepsilon}
\newcommand\st{\mathit{ST\hspace{-.2ex}}}
\newcommand\wt[1]{{\widetilde{#1}}}
\newcommand\mkl[1]{\makebox[0pt][l]{$#1$}}
\DeclareMathOperator\tor{Tor} 
\DeclareMathOperator\image{im} 
\newcommand\monomialmatrix[3]{{
\begin{array}{@{}r@{\:}r@{}c@{}l@{}}
  \begin{array}{@{}c@{}}		
	\begin{array}{@{}r@{}}
	\\
	#1
	\end{array}\!
  \end{array}						
&
  \begin{array}{@{}c@{}}		
	\begin{array}{@{}l@{}}\\				
	\end{array}						
	\\							
	\left[\begin{array}{@{}l@{}}				
	#3							
	\end{array}\!						
	\right.							
  \end{array}							
&
  #2					
&
  \begin{array}{@{}c@{}}		
	\begin{array}{@{}l@{}}\\				
	\end{array}						
	\\							
	\left.\!\begin{array}{@{}l@{}}				
	#3							
	\end{array}						
	\right]							
  \end{array}							
\end{array}
}}
\newcommand\red[1]{\color{red}#1}
\newcommand\blu[1]{\color{blue}#1}
\begin{document}

\mbox{}
\title{Minimal resolutions of lattice ideals}
\author{Yupeng Li}
\address{Mathematics Department\\Duke University\\Durham, NC 27708}
\urladdr{\url{http://sites.duke.edu/ypli}}
\author{Ezra Miller}
\address{Mathematics Department\\Duke University\\Durham, NC 27708}
\urladdr{\url{http://math.duke.edu/people/ezra-miller}}
\author{Erika Ordog}
\address{Mathematics Department\\Texas A$\&$M University\\College Station, TX 77843}
\urladdr{\url{http://www.math.tamu.edu/~erika.ordog}}

\makeatletter
  \@namedef{subjclassname@2020}{\textup{2020} Mathematics Subject Classification}
\makeatother
\subjclass[2020]{Primary: 13F65, 13D02, 05E40, 13F20, 13C13, 20M25;
Secondary: 05E45, 68W30, 20M14}

\date{16 September 2024}

\begin{abstract}
A canonical minimal free resolution of an arbitrary co-artinian
lattice ideal over the polynomial ring is constructed over any field
whose characteristic is $0$ or any but finitely many positive primes.
The differential has a closed-form combinatorial description as a sum
over lattice paths in $\ZZ^n$ of weights that come from sequences of
faces in simplicial complexes indexed by lattice points.  Over a field
of any characteristic, a non-canonical but simpler resolution is
constructed by selecting choices of higher-dimensional analogues of
spanning trees along lattice paths.  These constructions generalize
sylvan resolutions for monomial ideals by lifting them equivariantly
to lattice modules.
\end{abstract}
\maketitle

\tableofcontents

\section{Introduction}\label{s:intro}

One of the goals of commutative algebra is to construct free
resolutions of ideals over the polynomial ring $R = \kk
[x_1,\dots,x_n] = \kk[\xx]$ in $n$ variables over a field~$\kk$.
Combinatorial settings, such as that of determinantal or toric ideals,
provide models for the general theory.  For the class of lattice
ideals, of which the toric ideals are the prime examples,
the input data is a lattice~$L$ (that is, a subgroup) in $\ZZ^n$ whose
intersection with the nonnegative orthant is trivial: $L \cap \NN^n =
\{\0\}$; see \cite[Chapter~7]{cca}.  The \textit{lattice ideal} $I_L =
\<\xx^\uu - \xx^\vv \mid \uu, \vv \in \NN^n \text{ and } \uu - \vv \in
L\>$ is homogeneous with respect to a grading in which the degree of
each variable is a positive integer because
$L \cap \NN^n = \{\0\}$.

Many beautiful constructions of free resolutions of lattice ideals are
known in various settings.  For example, Peeva and Sturmfels
\cite{ps98a} present a minimal free resolution built from
combinatorial quadrangle resolutions given any codimension~$2$ lattice
ideal.  The same authors \cite{ps98b} construct the Scarf complex,
which yields a canonical combinatorial resolution of any lattice ideal
over a field of any characteristic; that resolution is minimal when
the ideal is generic.  One can also associate a lattice ideal $I_L$
with a graph~$G$ via the image~$L$ of the graph Laplacian matrix.
For complete undirected graphs, Manjunath and Sturmfels
\cite{manjunath-sturmfels2013} study the minimal free resolution of
$I_L$ by constructing the complex $\mathcal{CYC}_G$, which coincides
with the Scarf complex.  Subsequently, for connected undirected
graphs, Manjunath--Schreyer--Wilmes
\cite{manjunath-schreyer-wilmes2015} and Mohammadi--Shokrieh
\cite{mohammadi-shokrieh2014}, specify a minimal free resolution
of~$I_L$ using combinatorial graph theory and Gr\"obner bases.  More
recently, for directed graphs, O'Carroll and Planas-Vilanova
\cite{ocarroll--planas-vilanova2018} construct a free resolution
of~$I_L$ as the chain complex $\mathcal{CYC}_G$ associated to a
finite, strongly connected, weighted digraph.  The resolution is
minimal if and only if the digraph is strongly complete.  Other
properties and combinatorial descriptions of various resolutions of
lattice ideals can be found in
\cite{briales-morales--pison-casares--vigneron-tenorio2001,
charalambous-thoma2010, pison2003} and references therein.

Along the lines of canonical but perhaps not minimal resolutions,
Bayer and Sturmfels \cite{bayer-sturmfels1998} (see also
\cite[Chapter~9]{cca}) generalize the Scarf construction to the hull
resolution: a canonical cellular resolution for any monomial module.
The monomials with exponent vectors in a lattice~$L$ generate the
\textit{lattice module} $M_L = R \{\xx^\aa \mid\nolinebreak \aa
\in\nolinebreak L\}$.  When the lattice ideal $I_L$ is generic, the
resolution of~$M_L$ is minimal.  Any free resolution of the lattice
module $M_L$ over $R[L]$ descends functorially to a resolution of the
lattice ideal $I_L$ over~$R$ in a way that preserves minimality
\cite[Corollary~3.3]{bayer-sturmfels1998}.

Although the module structures of minimal resolutions of lattice
ideals were explicitly described by Briales-Morales, Pis\'on-Casares,
and Vigneron-Tenorio
\cite{briales-morales--pison-casares--vigneron-tenorio2001}, with
differentials filled in algorithmically, none of the known closed-form
combinatorial constructions for arbitrary lattice ideals are minimal.
The best result along these lines is by Tchernev \cite{tchernev2019}:
an explicit recursive algorithm for canonical minimal resolutions of
toric rings, where the lattice ideal is prime, using dynamical systems
on chain complexes.  His method works over a field of any
characteristic by using a transcendental extension of the base field
when necessary.

Until now there has been no closed-form description of the
differentials in any family of minimal resolutions that encompasses
all toric ideals, let alone all lattice ideals.  The main result of
this paper is the construction of a canonical minimal free resolution
of an arbitrary positively graded lattice ideal with a closed-form
combinatorial description of the differential in characteristic~$0$
and all but finitely many positive characteristics
(Theorem~\ref{t:sylvan}.\ref{i:thm-canonical} and
Remark~\ref{r:sylvan}.\ref{i:thm-canonical}).  It generalizes the
sylvan resolution for monomial ideals \cite[Theorem~3.7]{sylvan},
which is the first canonical closed-form combinatorial minimal free
resolution for arbitrary monomial ideals; it works in
characteristic~$0$ and all but finitely many positive characteristics.
Our resolutions of lattice ideals first use the sylvan construction to
minimally resolve the lattice module~$M_L$ over the polynomial
ring~$R$ in a canonical way, so that the resolution is equipped with a
natural free action of the lattice~$L$.  Thus the $R$-resolution of
$M_L$ is a canonical minimal resolution of~$M_L$ as an $R[L]$ module,
so it descends via the Bayer--Sturmfels functor to a resolution
of~$I_L$ over~$R$ by \cite[Corollary~3.3]{bayer-sturmfels1998}.

We provide a similar construction over a field of any characteristic
(Theorem~\ref{t:sylvan}.\ref{i:thm-noncanonical} and
Remark~\ref{r:sylvan}.\ref{i:thm-noncanonical}), but the resolution in
this case is non-canonical, since it involves choices of
higher-dimensional analogues of spanning trees
(Definition~\ref{d:hedge}.\ref{i:community}).

\subsubsection*{Acknowledgments}

\section{Overview of the construction}\label{s:overview}

A lattice module is a type of \textit{monomial module}, an
$R$-submodule of the Laurent polynomial ring $\kk [\xx_1^{\pm 1},
\ldots, \xx_n^{\pm 1}]$ generated by monomials $\xx^\aa$ for vectors
$\aa \in \ZZ^n$.  In addition to ensuring a positive grading, the
condition $L \cap \NN^n = \{\0\}$ means that the lattice module~$M_L$
is \textit{co-artinian}, which for a general monomial module~$M$ means
that it is generated by its set of minimal monomials:
$$
  \text{min}(M)
  =
  \{\xx^\aa \in M \mid \xx^\aa/x_i \notin M \text{ for all } i\};
$$
equivalently, the set of monomials in~$M$ with degree $\preceq\bb$ is
finite for all $\bb \in \ZZ^n$.

The $\ZZ^n$-graded Betti numbers of any co-artinian monomial module
can be computed by taking the homology of its Koszul simplicial
complexes for degrees $\bb \in \ZZ^n$.

\begin{prop}[{\cite[Corollary~1.13]{bayer-sturmfels1998}}]\label{p:hochster}
The $i^\mathrm{th}$ Betti number of any co-artinian monomial module
$M$ in degree $\bb \in \ZZ^n$ is
\begin{align*}
\beta_{i,\bb} (M)
 &= \dim_\kk \tor_i^R(\kk, M)_\bb\\
 &= \dim_\kk \HH_{i-1}(K^\bb M;\kk),
\end{align*}
where $K^\bb M = \{ \tau \in \{0,1\}^n \mid \xx^{\bb - \tau} \in M\}$
is the \emph{Koszul simplicial complex} of~$M$ in degree~$\bb$ and
$\HH\!$ denotes reduced homology.
\end{prop}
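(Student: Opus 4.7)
The plan is to compute $\tor^R_i(\kk, M)$ by resolving $\kk$ with the Koszul complex $K_\bullet$ on the variables $x_1, \ldots, x_n$, whose term in homological degree $i$ is $K_i = \bigoplus_{|\sigma| = i} R \cdot e_\sigma$ with $e_\sigma$ placed in $\ZZ^n$-graded degree $\ee_\sigma = \sum_{j\in\sigma} \ee_j$, and then to identify the resulting complex $(K_\bullet \otimes_R M)_\bb$ degree-by-degree with the reduced simplicial chain complex of $K^\bb M$ shifted by one in homological degree. The first equality is the definition of $\ZZ^n$-graded Betti numbers applied to a minimal free resolution of~$M$, so the content lies in the second equality.

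A preliminary observation is that $K^\bb M$ is in fact a simplicial complex: if $\tau' \subseteq \tau$ and $\xx^{\bb-\tau} \in M$, then $\xx^{\bb-\tau'} = \xx^{\tau - \tau'}\cdot \xx^{\bb - \tau} \in M$ by $R$-module closure. Next, since $M$ is a monomial submodule of the Laurent polynomial ring, each graded piece $M_\cc$ is at most one-dimensional, spanned by $\xx^\cc$ precisely when $\xx^\cc \in M$. Hence in degree~$\bb$, the $i^\mathrm{th}$ term $\bigoplus_{|\sigma|=i} M_{\bb - \ee_\sigma}$ of $K_\bullet \otimes_R M$ has a canonical $\kk$-basis indexed by those $\sigma$ with $\xx^{\bb-\ee_\sigma} \in M$, which under the identification of a subset with its indicator vector is precisely the set of $(i-1)$-dimensional faces of~$K^\bb M$.

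Finally I would check that, under this identification, the Koszul differential applied to $e_\sigma \otimes \xx^{\bb - \ee_\sigma}$ yields $\sum_{j \in \sigma} \pm\, e_{\sigma \setminus j} \otimes \xx^{\bb - \ee_{\sigma \setminus j}}$ with signs matching the standard simplicial boundary of $\sigma$ viewed as an $(i-1)$-simplex. Taking $i^\mathrm{th}$ homology then gives $\HH_{i-1}(K^\bb M; \kk)$, with the dimension shift coming from the fact that $i$-element subsets label simplices of dimension $i-1$; the case $i=0$ correctly recovers $\HH_{-1}$ via the augmentation $M_\bb \to \kk$.

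The only obstacle worth flagging is the bookkeeping of signs and of the dimension shift between Koszul homological degree~$i$ and simplicial dimension~$i-1$; both are standard once an ordering on the variables has been fixed, and no subtlety arises from the co-artinian hypothesis beyond ensuring that $M$ is resolved correctly by tensoring with a free resolution of~$\kk$ (which is automatic, since $K_\bullet$ is a resolution regardless of~$M$).
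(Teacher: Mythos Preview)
Your argument is correct and is the standard proof of this Hochster-type formula: resolve $\kk$ by the Koszul complex, tensor with~$M$, and identify the degree-$\bb$ piece with the augmented simplicial chain complex of~$K^\bb M$ shifted by one.

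The paper itself does not supply a proof; the proposition is quoted directly from \cite[Corollary~1.13]{bayer-sturmfels1998} with the citation placed in the proposition header. So there is no paper-proof to compare against---your write-up simply fills in the argument that the paper elects to cite. One small quibble: you call the first equality ``the definition,'' but depending on conventions the Betti number is defined via a minimal free resolution and then shown to equal $\dim_\kk \tor_i^R(\kk,M)_\bb$; the co-artinian hypothesis is what guarantees that a $\ZZ^n$-graded minimal free resolution exists in the first place (each degree is finite-dimensional), so it is not quite accurate to say the hypothesis plays no role beyond what you flagged. This does not affect the correctness of the computation of $\tor$ itself, which, as you note, goes through for any monomial module.
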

\noindent
Thus in a minimal $\ZZ^n$-graded free resolution of $M$ over~$R$,
the $i^\mathrm{th}$ free module with basis in degree~$\bb$ can be
expressed~as
\begin{equation}\label{eq:res-as-module}
  F_{i,\bb} = \HH_{i-1}(K^\bb M;\kk) \otimes_\kk R(-\bb),
\end{equation}
where $N(-\bb)$ is the $\ZZ^n$-graded shift of any $R$-module~$N$ up
by~$\bb$, so $N(-\bb)_\aa = N_{\aa-\bb}$.  What remains is the central
problem: specify differentials in the free resolution over~$R$.

In fact, for the current purpose, where $M = M_L$ is a lattice module,
arbitrary differentials~$\del_i$ over~$R$ do not suffice: they must in
addition be $L$-equivariant, in the sense that $\del_i$ should commute
with translation by~$\ell$ for $\ell \in L$.  Equivalently, the
resolution should carry an action of the group algebra~$R[L]$, making
it an $R[L]$-free resolution of~$M_L$.  The reason is to be able to
quotient modulo the action of~$L$ to get a $\ZZ^n/L$-graded free
resolution of~$R/I_L$ by free $R$-modules
\cite[Corollary~3.3]{bayer-sturmfels1998}.

Once the module structure of an $R$-free resolution of a co-artinian
module~$M$ is specified by Eq.~(\ref{eq:res-as-module}), the sylvan
method \cite{sylvan} yields differentials: for each pair $\aa \preceq
\bb$ of comparable lattice points in~$\ZZ^n$, construct a \emph{sylvan
homology morphism} $\HH_{i-1} K^\aa M \from \HH_i K^\bb M$
(Definition~\ref{d:sylvan}) in such a way that the induced
homomorphisms
$$
  \HH_{i-1} K^\aa M \otimes \kk[\xx] (-\aa)
  \,\from\,
  \HH_i K^\bb M \otimes \kk[\xx] (-\bb)
$$
of $\ZZ^n$-graded free $R$-modules for all pairs $\aa \preceq \bb$
constitute a minimal free resolution \cite[Theorem~3.7 and
Corollary~9.5]{sylvan}.
Alas, the sylvan method was developed for monomial ideals~$I$ rather
than for monomial modules.  Our first observation is therefore that
the sylvan method extends with no difficulty to the case where the
monomial ideal~$I$ is replaced by a co-artinian monomial module~$M$
(Proposition~\ref{p:sylvan-for-M}).

When $M = M_L$ is a lattice module, the set of sylvan morphisms is
automatically $L$-invariant (Proposition~\ref{p:equivariant}).  For
our main result, Theorem~\ref{t:sylvan}, it remains only to take
the quotient modulo~$L$ of the sylvan $R$-free resolution of~$M_L$
thus constructed.

\section{The sylvan method}\label{s:sylvan}

The sylvan method \cite{sylvan} constructs explicit, closed-form
minimal free resolutions of a monomial ideal~$I$ from combinatorial
information \cite[Definitions~3.5 and~9.4]{sylvan} intrinsic to the
Koszul simplicial complexes~$K^\bb I$ for all lattice points $\bb \in
\ZZ^n$.  This procedure works when the field~$\kk$ has arbitrary
characteristic if certain combinatorial choices are allowed in each
Koszul simplicial complex \cite[Corollary~9.5]{sylvan}.  In addition,
the procedure works canonically, without making any choices at all, as
long as the characteristic of~$\kk$ avoids finitely many primes
\cite[Theorem~3.7]{sylvan}.  Beyond some easily stated basic properties
of the differentials in these sylvan resolutions---canonical or
non-canonical---the details are not relevant to the constructions of
sylvan resolutions of lattice ideals here.  We therefore isolate the
properties of sylvan resolutions required for the extension to
monomial modules and then to lattice ideals.

\begin{defn}[{\cite[Definition~2.1, Example~2.2, and Definition~9.2]{sylvan}}]%
\label{d:hedge}
Fix a simplicial complex $K$ with reduced
differential $\del_i: \wt C_i K \to \wt C_{i-1} K$ over a given
ring~$A$ which is assumed to be a field~$\kk$ or the integers~$\ZZ$.
\begin{enumerate}
\item\label{i:shrubbery}%
A \emph{shrubbery} for $\del_i$ is a maximal set~$T$ of
$i$-dimensional faces of~$K$ whose image $\del_i(T)$ is independent in
the boundaries $\wt B_i K = \del_i(\wt C_i K)$.

\item\label{i:stake}%
A \emph{stake set} for $\del_i$ is a minimal set~$S$ of
$(i-1)$-dimensional faces of~$K$
such that the composite $\wt B_i K \into \wt C_i K \onto A\{S\}$ is
injective.

\item\label{i:hedge}%
A \emph{hedge} for $\del_i$ is a pair
consisting of a stake set
and a shrubbery
for~$\del_i$.  A hedge for $\del_i$ may be expressed as $\st_i =
(S_{i-1},T_i)$ to indicate that the faces in~$S$ have dimension~$i-1$
while the faces in~$T$ have dimension~$i$.

\item\label{i:community}%
A \emph{community} in~$K$ is a sequence $\st_\spot =
(\st_0,\st_1,\st_2,\dots)$ of hedges for $\del_0, \del_1, \del_2\dots$
with $T_i \cap S_i = \nothing$ for all~$i$.
\end{enumerate}
\end{defn}

\begin{defn}\label{d:sylvan}
Fix a co-artinian monomial $R$-module~$M$.
A family of \mbox{homomorphisms}
$$
  \wt C_{i-1} K^\aa M
  \ \stackrel{\ D^{\aa\bb}}\filleftmap\
  \wt C_i K^\bb M
$$
between chain groups of the Koszul simplicial complexes of~$M$
over~$\kk$ for all comparable pairs $\aa \preceq \bb$ of lattice
points and all homological degrees~$i \in \ZZ$ is
\emph{canonical~sylvan}~if
\begin{enumerate}
\item\label{i:induced-homs}%
$D^{\aa\bb}$ induces
morphisms $\HH_{i-1} K^\aa M \from \HH_i K^\bb M$ whose induced
homomorphisms
$$
  \HH_{i-1} K^\aa M \otimes_\kk R(-\aa)
  \,\from\,
  \HH_i K^\bb M \otimes_\kk R(-\bb)
$$
of $\NN^n$-graded free $R$-modules constitute a minimal free
resolution of~$M$, and

\item\label{i:canonical}%
$D^{\aa\bb}$ depends only on the Koszul simplicial complexes $K^\cc M$
indexed by $\cc$ in the interval $[\aa,\bb]$.
\end{enumerate}\setcounter{separated}{\value{enumi}}
The family $D^{\aa\bb}$ is \emph{noncanonical sylvan} if, instead of
condition~\ref{i:canonical},
\begin{enumerate}\setcounter{enumi}{\value{separated}}
\item\label{i:noncanonical}%
$D^{\aa\bb}$ depends only on the Koszul simplicial complexes $K^\cc
M$, along with a community therein, indexed by $\cc$ in the interval
$[\aa,\bb]$.
\end{enumerate}
The family $\{D^{\aa\bb}\}_{\aa\preceq\bb}$ is \emph{sylvan} if it is
canonical or noncanonical sylvan.
\end{defn}

In other words, in a canonical sylvan resolution of a monomial
module~$M$, the homomorphism $\HH_{i-1} K^\aa M \otimes R(-\aa)
\,\from\, \HH_i K^\bb M \otimes R(-\bb)$ is constructed entirely from
information intrinsic to the restriction of~$M$ to the interval
$[\aa-\1,\bb]$, where $\1 = (1,\dots,1)$, with no external choices;
the $\1$ is needed because $K^\cc M$ reflects the interval $[\cc - \1,
\cc]$.  In contrast, noncanonical sylvan resolutions involve choices
of communities.

\begin{example}\label{e:noncanonical-sylvan}
Any monomial ideal $I \subseteq R$ has a noncanonical sylvan family
\cite[Corollary~9.5]{sylvan} in which $D^{\aa\bb}$ is specified by an
explicit, closed-form sum over all saturated decreasing lattice paths
from $\bb$ to~$\aa$ \cite[Definition~9.4]{sylvan}, once communities
have been specified in the Koszul simplicial complexes of~$I$; any
communities suffice.
\end{example}

To specify restrictions on the characteristic of the field~$\kk$ in
canonical sylvan constructions, we
summarize the relevant points from \cite[Definitions~2.1, 2.11,
and~2.13]{sylvan}.

\begin{defn}\label{d:torsionless}
Fix a simplicial complex~$K$.
\begin{enumerate}
\item%
Write $\tau(T)$ for the index of the subgroup of~$\wt B_i$ generated
over~$\ZZ$ by the images of the faces in a shrubbery~$T$ for~$\del_i$
over~$\ZZ$.

\item%
Dually, write $\sigma(S)$ for the index of the image of~$\wt B_i K$ in
the span~$\ZZ\{S\}$ of a stake set~$S$ for~$\del_i$ over~$\ZZ$.
\end{enumerate}
A field~$\kk$ is \emph{torsionless} for~$K$ if for all~$i$ the
characteristic of~$\kk$ does not divide $\tau_i = \sum_T \tau(T)^2$ or
$\sigma_i = \sum_S \sigma(S)^2$ or the order of the torsion subgroup
of~$\wt C_i K / \wt B_i K$, where the sums are respectively over all
shrubberies~$T$ and stake sets~$S$ for~$\del_i$.
\end{defn}

\begin{example}\label{e:canonical-sylvan}
Any monomial ideal $I \subseteq R$ has a canonical sylvan family
\cite[Theorem~3.7]{sylvan} in which $D^{\aa\bb}$ is specified by an
explicit, closed-form sum over all saturated decreasing lattice paths
from $\bb$ to~$\aa$ \cite[Definition~3.6]{sylvan}, once $\kk$ is
torsionless
for the
Koszul simplicial complexes of~$I$.
\end{example}

\begin{remark}\label{r:torsionless}
There are only finitely many simplicial complexes on $n$~vertices, so
a field~$\kk$ is torsionless for an arbitrary co-artinian monomial
module~$M$ it its characteristic avoids finitely many positive integer
primes.
\end{remark}

The main purpose of this is section is to extend the sylvan and
canonical sylvan families in Examples~\ref{e:noncanonical-sylvan}
and~\ref{e:canonical-sylvan} to co-artinian monomial modules.  The key
but nonetheless elementary observation is that sylvan morphisms are
preserved by translation.

\begin{lemma}\label{l:translation}
Fix a monomial module~$M$ with a sylvan family (canonical or
noncanonical).  Translation by a vector~$\ell \in \ZZ^n$ naturally
induces a sylvan family on the $\ZZ^n$-graded shift~$M(-\ell)$ in
which the sylvan homology morphism
$$
  \wt C_{i-1} K^{\aa+\ell}M(-\ell)
  \ \stackrel{\ D^{\aa+\ell\;\bb+\ell}}\filleftmap\
  \wt C_i K^{\bb+\ell}M(-\ell)
$$
is the sylvan homology morphism
$D^{\aa\bb}$ on~$M$ itself.
\end{lemma}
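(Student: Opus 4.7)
The plan is to exploit the fact that the Koszul simplicial complexes themselves are invariant under the simultaneous translation of both the module and the grading degree. By the shift convention $M(-\ell)_\aa = M_{\aa-\ell}$, a monomial $\xx^\cc$ lies in $M(-\ell)$ if and only if $\xx^{\cc-\ell}$ lies in~$M$. Plugging this into the defining condition $\tau \in K^\bb M \iff \xx^{\bb-\tau} \in M$ yields directly
\[
K^{\bb+\ell}\,M(-\ell) \;=\; K^\bb M
\]
as subcomplexes of the Boolean cube $\{0,1\}^n$, literally and not merely up to isomorphism. Consequently the reduced chain groups $\wt C_i K^{\bb+\ell} M(-\ell)$ and $\wt C_i K^\bb M$ are the same vector spaces with the same boundary operators.

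With this identification in hand, the family on $M(-\ell)$ is defined by transport: set $D^{\aa+\ell\;\bb+\ell}$ for $M(-\ell)$ to equal $D^{\aa\bb}$ for~$M$ under the equality of chain complexes above. It then remains to verify the conditions of Definition~\ref{d:sylvan}. For condition~\ref{i:induced-homs}, note that the $R$-free module $\HH_{i-1} K^{\bb+\ell} M(-\ell)\otimes_\kk R(-\bb-\ell)$ equals $\HH_{i-1} K^\bb M \otimes_\kk R(-\bb-\ell)$, so the whole transported complex is the $\ZZ^n$-graded shift by~$\ell$ of the sylvan resolution of~$M$; since shifting a minimal free resolution of~$M$ produces a minimal free resolution of~$M(-\ell)$, the required property holds. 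For the canonical condition~\ref{i:canonical} (respectively the noncanonical condition~\ref{i:noncanonical}), observe that $\cc \in [\aa,\bb]$ if and only if $\cc+\ell \in [\aa+\ell,\bb+\ell]$, and that under the identification $K^\cc M = K^{\cc+\ell}M(-\ell)$ a community in~$K^\cc M$ is the same datum as a community in $K^{\cc+\ell}M(-\ell)$; hence the dependence of $D^{\aa\bb}$ on Koszul complexes (and communities) in $[\aa,\bb]$ translates exactly to the dependence of $D^{\aa+\ell\;\bb+\ell}$ on the corresponding data in $[\aa+\ell,\bb+\ell]$.

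There is essentially no obstacle of substance: the content of the lemma is the naturality of the sylvan construction under lattice translation. The only step that demands genuine care is confirming the on-the-nose identity $K^{\bb+\ell}M(-\ell) = K^\bb M$ with the chosen shift convention, since everything else is a straightforward transport of structure along that equality.
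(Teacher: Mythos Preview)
Your proof is correct and follows essentially the same approach as the paper: both hinge on the identity $K^{\cc+\ell}M(-\ell)=K^\cc M$ and then invoke Definition~\ref{d:sylvan}, with the interval $[\aa,\bb]$ carried to $[\aa+\ell,\bb+\ell]$ under translation. You have simply unpacked the verification of each condition in Definition~\ref{d:sylvan} more explicitly than the paper's terse two-line proof.
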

\begin{proof}
Translation up by $\ell$ takes that interval to $[\aa + \ell, \bb +
\ell]$.  The lemma is therefore immediate from
Definition~\ref{d:sylvan} and the fact that $K^{\cc+\ell} M(-\ell) =
K^\cc M$.
\end{proof}

This observation has two important manifestations, detailed in
Propositions~\ref{p:sylvan-for-M} and~\ref{p:equivariant}, which
respectively use translation of
\begin{itemize}
\item%
any co-artinian module up so that an arbitrarily large subset of it
sits in the nonnegative orthant, and
\item%
a lattice module along the lattice.
\end{itemize}

\begin{prop}\label{p:sylvan-for-M}
Fix a co-artinian monomial module~$M$.  The noncanonical and canonical
sylvan families in Examples~\ref{e:noncanonical-sylvan}
and~\ref{e:canonical-sylvan} work verbatim when the monomial ideal~$I$
is replaced by an arbitrary co-artinian monomial module~$M$.
\end{prop}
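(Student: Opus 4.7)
The plan is to reduce everything to the monomial ideal case via the translate-and-truncate trick made available by Lemma~\ref{l:translation}.

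For any pair $\aa\preceq\bb$ in~$\ZZ^n$, I would choose an integer vector $\ell\in\ZZ^n$ with $\aa+\ell\succeq\1$, so that the entire interval $[\aa+\ell-\1,\;\bb+\ell]$ lies in~$\NN^n$. Because $M$ is co-artinian, the shift $M(-\ell)$ contains only finitely many monomials of degree~$\preceq\bb+\ell$; let $I\subseteq R$ be the monomial ideal they generate. For every $\cc\in[\aa+\ell-\1,\;\bb+\ell]$ and every $\tau\in\{0,1\}^n$ one has $\cc-\tau\preceq\bb+\ell$, so $\xx^{\cc-\tau}\in I$ if and only if $\xx^{\cc-\tau}\in M(-\ell)$. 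Thus
\[
  K^\cc I \;=\; K^\cc M(-\ell)
  \qquad\text{for every } \cc\in[\aa+\ell-\1,\;\bb+\ell],
\]
and in the noncanonical case any community chosen in one of these complexes transports verbatim to the other.

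I would then define $D^{\aa\bb}$ for~$M$ by applying Example~\ref{e:canonical-sylvan} (respectively Example~\ref{e:noncanonical-sylvan}) to~$I$ to obtain $D^{\aa+\ell\;\bb+\ell}_I$, identifying it with a homomorphism on the chain groups of~$K^{\bb+\ell}M(-\ell)$ via the agreement of Koszul complexes above, and then transporting it back to~$M$ using Lemma~\ref{l:translation}. The locality axioms~\ref{i:canonical} and~\ref{i:noncanonical} in Definition~\ref{d:sylvan} say that each $D^{\aa\bb}$ is determined by the Koszul complexes (and, in the noncanonical case, communities) indexed by~$[\aa-\1,\bb]$, so the resulting morphism is independent of the choices of~$\ell$ and of~$I$, and the whole family $\{D^{\aa\bb}_M\}_{\aa\preceq\bb}$ inherits the locality properties of its ideal counterparts.

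It remains to verify Definition~\ref{d:sylvan}\ref{i:induced-homs}. The $\ZZ^n$-graded Betti profile of the resulting complex matches Proposition~\ref{p:hochster} by construction, so minimality is automatic once exactness is in place. Both the chain-complex identity $D\circ D=0$ and the exactness at a fixed $\ZZ^n$-degree~$\cc$ involve only finitely many summands, because co-artinianness of~$M$ forces only finitely many $\aa\preceq\cc$ to contribute nonzero Betti terms in each homological degree. Choosing $\ell$ large enough to translate this bounded collection of contributing degrees into~$\NN^n$ and then truncating to a monomial ideal~$I$ as above, the identities to be checked for~$M$ reduce to the corresponding identities for~$I$, which hold by Examples~\ref{e:noncanonical-sylvan} and~\ref{e:canonical-sylvan}. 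The only real obstacle is exploiting locality rigorously; once Definition~\ref{d:sylvan}\ref{i:canonical}/\ref{i:noncanonical} is pinned down to say that $D^{\aa\bb}$ depends only on a bounded interval of Koszul data, the translate-and-truncate passage from monomial modules to monomial ideals becomes essentially bookkeeping.
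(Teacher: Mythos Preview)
Your proposal is correct and follows essentially the same translate-and-truncate strategy as the paper: use co-artinianness to ensure only finitely many degrees contribute at a given~$\bb$, shift~$M$ so the relevant degrees land in~$\NN^n$, and reduce both the definition of~$D^{\aa\bb}$ and the verification that the resulting complex is exact to the monomial ideal case already handled in Examples~\ref{e:noncanonical-sylvan} and~\ref{e:canonical-sylvan}.  The paper's proof is terser---it leaves the construction of~$D^{\aa\bb}$ and the independence of choices implicit in ``work verbatim,'' since the sylvan formulas themselves are local---but your more explicit unpacking via Lemma~\ref{l:translation} and the locality axioms amounts to the same argument; one small point to tidy is that the monomials of $M(-\ell)$ of degree $\preceq\bb+\ell$ need not all lie in~$\NN^n$, so either restrict to those that do or enlarge~$\ell$ accordingly before forming~$I$.
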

\begin{proof}
The co-artinian hypothesis implies that only finitely many free
modules
$$
  \HH_{i-1} K^\aa M \otimes R(-\aa)
$$
contribute nonzero $\ZZ^n$-graded degree~$\bb$ components.  That the
displayed homomorphisms in
Definition~\ref{d:sylvan}.\ref{i:induced-homs} constitute a complex
and that this complex is exact can hence be verified at any given
$\ZZ^n$-graded degree~$\bb$ by translating the monomial module~$M$ up
so that the nonnegative orthant contains all of the degrees~$\aa$
beneath~$\bb$ such that $K^\aa M$ has at least one face.
\end{proof}

\section{Equivariant sylvan resolutions}\label{s:equivariant}

\begin{lemma}\label{l:equivariant}
When $M = M_L$ is a co-artinian lattice module, the Koszul simplicial
complexes $K^\aa M_L$ and $K^{\aa + \ell} M_L$ are equal as simplicial
subcomplexes of the simplex on $\{1,\dots,n\}$ whenever $\ell \in L$.
\end{lemma}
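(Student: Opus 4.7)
The plan is to unwind the definition of the Koszul simplicial complex directly and exploit the lattice invariance built into~$M_L$. Recall that $K^\bb M_L = \{\tau \in \{0,1\}^n \mid \xx^{\bb-\tau} \in M_L\}$, so the whole statement reduces to the set equality
\[
  \{\tau \in \{0,1\}^n \mid \xx^{\bb-\tau} \in M_L\}
  \ =\
  \{\tau \in \{0,1\}^n \mid \xx^{\bb+\ell-\tau} \in M_L\}
\]
for every $\ell \in L$ and every $\bb \in \ZZ^n$ (with $\bb = \aa$ in the statement). Since both sides are subsets of $\{0,1\}^n$, equality of simplicial subcomplexes of the simplex on $\{1,\dots,n\}$ is the same as equality as sets.

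My first step is to describe membership in $M_L$ concretely. By definition $M_L = R\{\xx^\aa \mid \aa \in L\}$ is the $R$-submodule of $\kk[\xx_1^{\pm 1},\dots,\xx_n^{\pm 1}]$ generated by the Laurent monomials with exponents in~$L$, so for any $\cc \in \ZZ^n$ one has $\xx^\cc \in M_L$ if and only if $\cc \in L + \NN^n$, that is, $\cc - \aa \in \NN^n$ for some $\aa \in L$.

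The second step is the key algebraic observation: the set $L + \NN^n$ is invariant under translation by any $\ell \in L$, because $\ell + (L + \NN^n) = (\ell + L) + \NN^n = L + \NN^n$ since $L$ is a subgroup of~$\ZZ^n$. Equivalently, $\xx^\ell \cdot M_L = M_L$ as $R$-submodules of the Laurent polynomial ring.

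The third and final step is to combine these. For a fixed $\tau \in \{0,1\}^n$, setting $\cc = \bb - \tau$ we get $\xx^{\bb-\tau} \in M_L \iff \cc \in L + \NN^n \iff \cc + \ell \in L + \NN^n \iff \xx^{\bb+\ell-\tau} \in M_L$, where the middle equivalence is the invariance from the second step. Hence $\tau \in K^\bb M_L \iff \tau \in K^{\bb+\ell} M_L$, and the two subcomplexes of the $(n-1)$-simplex coincide. There is no real obstacle here: everything is a direct unwinding of definitions, and the only conceptual content is the self-evident fact that the monomial support $L + \NN^n$ of~$M_L$ is $L$-invariant.
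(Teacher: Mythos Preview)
Your proof is correct and follows exactly the paper's approach: the paper's one-line proof simply asserts that the claim is ``immediate from the $L$-invariance of~$M_L$ itself,'' and your argument is precisely the careful unwinding of that invariance ($\xx^\ell \cdot M_L = M_L$, i.e., $L + \NN^n$ is $L$-stable) through the definition of~$K^\bb M_L$.
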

\begin{proof}
This is immediate from the $L$-invariance of~$M_L$ itself.
\end{proof}

\begin{remark}\label{r:equivariant}
When $M = M_L$ is a lattice module in
Proposition~\ref{p:sylvan-for-M}, any canonical sylvan family is
automatically $L$-equivariant, in a sense to be made precise in
Proposition~\ref{p:equivariant}.  However, noncanonical sylvan
families need not be $L$-equivariant: different communities can be
selected in Koszul simplicial complexes indexed by lattice points in
the same coset of~$L$ even though the Koszul simplicial complexes
themselves are the same.  Equivariant noncanonical sylvan families
still exist, which is crucial for constructing closed-form
combinatorial minimal resolutions over fields of arbitrary
characteristic, but additional care is required to construct them.
\end{remark}

\begin{prop}\label{p:equivariant}
Fix a co-artinian lattice module~$M_L$.
\begin{enumerate}
\item%
In any canonical sylvan family the maps satisfy $D^{\aa\bb} =
D^{\aa+\ell\;\bb+\ell}$ for all $\ell \in L$ as homomorphisms from
chains of~$K^\bb M_L\hspace{-.07pt} = K^{\bb+\ell} M_L$ to those of
$K^\aa M_L\hspace{-.07pt} = \nolinebreak K^{\aa+\ell} M_L$.%

\item\label{i:equivariant-noncanonical}%
A noncanonical sylvan family can be constructed so that $D^{\aa\bb} =
D^{\aa+\ell\;\bb+\ell}$ for all $\ell \in L$ by selecting one
community in each of the Koszul simplicial complexes~$K^\cc M_L$ for
$\cc$ in a set of representatives for the cosets of~$L$.
\end{enumerate}
These sylvan families are called \emph{equivariant}.
\end{prop}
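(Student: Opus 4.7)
The plan is to reduce both parts to Lemma~\ref{l:translation} via the key observation that $M_L(-\ell) = M_L$ whenever $\ell \in L$: the generators $\{\xx^\aa \mid \aa \in L\}$ of the lattice module are permuted by translation along the lattice, so shifting $M_L$ by any $\ell \in L$ returns the same monomial module. Applied to a given sylvan family on $M_L$, Lemma~\ref{l:translation} therefore produces another sylvan family on $M_L$ whose bidegree-$(\aa+\ell,\bb+\ell)$ morphism equals the original $D^{\aa\bb}$. Whether this second family coincides with the original then reduces to asking whether the sylvan construction, fed the same input data, produces the same output; this is where Lemma~\ref{l:equivariant} enters.

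For part~1, I would take a canonical sylvan family on $M_L$ and fix $\ell \in L$. The family obtained from Lemma~\ref{l:translation} assigns $D^{\aa\bb}$ to the pair $(\aa+\ell,\bb+\ell)$, while the original family assigns $D^{\aa+\ell\;\bb+\ell}$. By Definition~\ref{d:sylvan}(\ref{i:canonical}), the latter morphism is determined entirely by the Koszul simplicial complexes $K^\cc M_L$ for $\cc \in [\aa+\ell,\bb+\ell]$; by Lemma~\ref{l:equivariant} these complexes coincide with those $K^{\cc-\ell} M_L$ for $\cc - \ell \in [\aa,\bb]$ that determine $D^{\aa\bb}$. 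Canonicity forces the two outputs to agree, yielding $D^{\aa+\ell\;\bb+\ell} = D^{\aa\bb}$.

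For part~2, the plan is to arrange the community choices $L$-invariantly from the outset. I would fix a set $\mathcal{R}$ of coset representatives for $\ZZ^n/L$, choose a community in $K^\cc M_L$ for each $\cc \in \mathcal{R}$, and then propagate to every lattice point via the identification $K^{\cc+\ell} M_L = K^\cc M_L$ of Lemma~\ref{l:equivariant}. This produces a globally $L$-invariant assignment of a community to every Koszul simplicial complex of $M_L$; Example~\ref{e:noncanonical-sylvan}, now available for monomial modules by Proposition~\ref{p:sylvan-for-M}, then yields a noncanonical sylvan family $\{D^{\aa\bb}\}$. The equivariance $D^{\aa+\ell\;\bb+\ell} = D^{\aa\bb}$ follows exactly as in part~1, invoking Definition~\ref{d:sylvan}(\ref{i:noncanonical}) in place of (\ref{i:canonical}): the combined data (Koszul complexes together with communities) at $[\aa+\ell,\bb+\ell]$ matches that at $[\aa,\bb]$ under translation.

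No step requires real technical work; the whole argument is bookkeeping around Lemmas~\ref{l:translation} and~\ref{l:equivariant}. The one conceptual subtlety, already flagged in Remark~\ref{r:equivariant}, is the reason part~2 demands coset-representative prescription rather than pointwise choice: distinct communities selected at $\cc$ and $\cc+\ell$ in complexes that happen to agree as simplicial complexes would destroy equivariance, so the noncanonical construction must insist on one community per $L$-coset, not per lattice point.
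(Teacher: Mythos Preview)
Your argument is correct and follows essentially the same route as the paper: the canonical case via Lemma~\ref{l:translation} together with Definition~\ref{d:sylvan}(\ref{i:canonical}), and the noncanonical case by propagating one community per coset and invoking Example~\ref{e:noncanonical-sylvan} through Proposition~\ref{p:sylvan-for-M}. You have simply made explicit the ingredients (notably $M_L(-\ell)=M_L$ and Lemma~\ref{l:equivariant}) that the paper's two-sentence proof leaves implicit.
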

\begin{proof}
The canonical case follows from Lemma~\ref{l:translation} by
Definition~\ref{d:sylvan}.\ref{i:canonical}.

For the noncanonical case, copy the selected community for each coset
representative into the Koszul simplicial complexes for all other
lattice points in the coset.  This yields a sylvan family as in
Example~\ref{e:noncanonical-sylvan} by way of
Proposition~\ref{p:sylvan-for-M}.
\end{proof}

\begin{cor}\label{c:equivariant}
The minimal free resolution over~$R$ of a co-artinian lattice
module~$M_L$ arising from an equivariant sylvan family
(Proposition~\ref{p:equivariant}) is $L$-equivariant; equivalently, it
is a minimal $\ZZ^n$-graded $R[L]$-free resolution of~$M_L$.\qed
\end{cor}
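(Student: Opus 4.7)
The plan is to verify directly that the sylvan resolution of~$M_L$ produced by Proposition~\ref{p:equivariant} carries a natural $L$-action commuting with the differential, and that this action makes each free module of the resolution free over~$R[L]$. The substantive content has already been isolated in Proposition~\ref{p:equivariant}: the sylvan homology morphisms are invariant under simultaneous translation of both endpoints by lattice vectors, so what remains is essentially formal bookkeeping.

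First I would define the $L$-action on each free module $F_i = \bigoplus_\bb \HH_{i-1} K^\bb M_L \otimes_\kk R(-\bb)$. For $\ell \in L$, the $\ZZ^n$-graded, $R$-linear, degree-$\ell$ endomorphism $\phi_\ell$ of~$F_i$ sends the $\bb$-summand to the $(\bb + \ell)$-summand: on the homology factor it uses the equality $\HH_{i-1} K^\bb M_L = \HH_{i-1} K^{\bb + \ell} M_L$ from Lemma~\ref{l:equivariant}, and on the polynomial factor it uses the tautological degree-$\ell$ isomorphism $R(-\bb) \simto R(-\bb - \ell)$. The group-action relations $\phi_{\ell + \ell'} = \phi_\ell \circ \phi_{\ell'}$ and $\phi_0 = \mathrm{id}$ are immediate from transitivity of these two identifications.

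Next I would check that $\phi_\ell \circ \del_i = \del_i \circ \phi_\ell$. The component of~$\del_i$ from the $\bb$-summand to the $\aa$-summand (for $\aa \preceq \bb$) is the sylvan morphism $D^{\aa\bb}$ on the homology factor combined with multiplication by $\xx^{\bb - \aa}$ on the polynomial factor, by Definition~\ref{d:sylvan}.\ref{i:induced-homs}. Conjugating by $\phi_\ell$ produces the analogous component from the $(\bb + \ell)$-summand to the $(\aa + \ell)$-summand, namely $D^{\aa + \ell\;\bb + \ell}$ combined with multiplication by $\xx^{(\bb + \ell) - (\aa + \ell)} = \xx^{\bb - \aa}$. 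Proposition~\ref{p:equivariant} gives $D^{\aa\bb} = D^{\aa + \ell\;\bb + \ell}$, so the two components agree. Hence $F_\spot$ becomes a complex of $\ZZ^n$-graded $R[L]$-modules whose augmentation $F_0 \onto M_L$ is automatically $R[L]$-linear because $M_L$ is $L$-invariant by construction.

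Finally, because $L$ acts freely on the index set $\ZZ^n$ of summands of~$F_i$, grouping summands by $L$-orbit yields the $R[L]$-free decomposition $F_i \cong \bigoplus_{[\bb] \in \ZZ^n/L} \HH_{i-1} K^\bb M_L \otimes_\kk R[L](-\bb)$, and minimality over~$R[L]$ is inherited from $R$-minimality of the sylvan resolution supplied by Proposition~\ref{p:sylvan-for-M}. The only genuine verification beyond Proposition~\ref{p:equivariant} is commutation of $\phi_\ell$ with the differential, and even this reduces to the trivial observation that the monomial scalar $\xx^{\bb - \aa}$ depends only on the difference $\bb - \aa$ and so survives replacing $(\aa, \bb)$ by $(\aa + \ell, \bb + \ell)$; no real obstacle arises.
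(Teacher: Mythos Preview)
Your proof is correct and is exactly the argument the paper has in mind: the corollary is stated with an immediate \qed, treating it as a direct consequence of Proposition~\ref{p:equivariant}, and you have simply written out the bookkeeping that the paper leaves to the reader.
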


Now comes the main result: the construction of combinatorial,
closed-form minimal free resolutions of co-artinian lattice ideals in
arbitrary characteristic, and canonical such resolutions when the
characteristic avoids finitely many positive primes.

\begin{thm}\label{t:sylvan}
Fix a field\/~$\kk$ and a lattice ideal $I_L \subseteq R =
\kk[x_1,\dots,x_n]$.
\begin{enumerate}
\item\label{i:thm-canonical}%
If\/~$\kk$ is torsionless (Definition~\ref{d:torsionless}) for the
Koszul simplicial complexes of the lattice module~$M_L$ and $F_\spot$
is the free resolution of the lattice module~$M_L$ afforded by the
canonical sylvan family in Proposition~\ref{p:sylvan-for-M}, then
$F_\spot \otimes_{R[L]} R$ is a minimal $\ZZ^n/L$-graded $R[L]$-free
resolution of~$M_L$.

\item\label{i:thm-noncanonical}%
If $F_\spot$ is the free resolution of the lattice module~$M_L$
afforded by any noncanonical sylvan family as in
Proposition~\ref{p:equivariant}.\ref{i:equivariant-noncanonical}, then
$F_\spot \otimes_{R[L]} R$ is a minimal $\ZZ^n/L$-graded $R[L]$-free
resolution of~$M_L$.
\end{enumerate}
\end{thm}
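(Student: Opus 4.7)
The plan is to combine the machinery of Sections~\ref{s:sylvan} and~\ref{s:equivariant} with the Bayer--Sturmfels descent functor; the theorem is essentially a clean assembly of ingredients already built up.

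For part~(\ref{i:thm-canonical}), I would proceed in three steps. First, I would apply Proposition~\ref{p:sylvan-for-M} to the co-artinian lattice module~$M_L$: since~$\kk$ is torsionless for the Koszul simplicial complexes of~$M_L$, the canonical sylvan family of Example~\ref{e:canonical-sylvan} yields a minimal $\ZZ^n$-graded $R$-free resolution $F_\spot$ of~$M_L$ with module structure as in Eq.~(\ref{eq:res-as-module}). Second, I would invoke Proposition~\ref{p:equivariant}(1): the canonicity clause Definition~\ref{d:sylvan}.\ref{i:canonical} together with Lemma~\ref{l:equivariant} forces $D^{\aa\bb} = D^{\aa+\ell\;\bb+\ell}$ for all $\ell \in L$, so Corollary~\ref{c:equivariant} promotes $F_\spot$ to a minimal $\ZZ^n$-graded $R[L]$-free resolution of~$M_L$. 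Third, I would invoke \cite[Corollary~3.3]{bayer-sturmfels1998}: tensoring $F_\spot$ with~$R$ over~$R[L]$ descends the $R[L]$-free resolution of~$M_L$ to a minimal $\ZZ^n/L$-graded $R$-free resolution of~$R/I_L$, minimality intact.

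For part~(\ref{i:thm-noncanonical}), the argument has the same shape, with Example~\ref{e:canonical-sylvan} replaced by Example~\ref{e:noncanonical-sylvan} and Proposition~\ref{p:equivariant}(1) replaced by Proposition~\ref{p:equivariant}(\ref{i:equivariant-noncanonical}). The one extra move is that equivariance must be engineered by hand: choose one community per coset of~$L$ and copy it along the orbit, which is legitimate because Lemma~\ref{l:equivariant} identifies the Koszul simplicial complexes across an orbit. Once equivariance is in place, Corollary~\ref{c:equivariant} and Bayer--Sturmfels descent close the argument verbatim.

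The hard part is not really an obstacle but a bookkeeping item: one must verify that the $R[L]$-structure produced at Step~2 interacts correctly with the descent functor, so that $F_\spot \otimes_{R[L]} R$ is genuinely a resolution indexed by $\ZZ^n/L$. Concretely, each free summand $\HH_{i-1} K^\bb M_L \otimes_\kk R[L](-\bb)$ becomes $\HH_{i-1} K^\bb M_L \otimes_\kk R(-\ol\bb)$ with $\ol\bb$ the image of $\bb$ in $\ZZ^n/L$, and minimality of~$F_\spot$ as an $R[L]$-resolution transfers to the descent by the last clause of \cite[Corollary~3.3]{bayer-sturmfels1998}. Since the substantive combinatorics and algebra live in the preceding sections, nothing further is required.
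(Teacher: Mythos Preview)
Your proposal is correct and follows the same approach as the paper: apply Corollary~\ref{c:equivariant} to obtain an $L$-equivariant minimal $R[L]$-free resolution of~$M_L$, then descend via the Bayer--Sturmfels functor \cite[Corollary~3.3]{bayer-sturmfels1998}. The paper's proof is a one-line citation of exactly these two ingredients; you have simply unpacked the steps (Proposition~\ref{p:sylvan-for-M}, Proposition~\ref{p:equivariant}, Corollary~\ref{c:equivariant}, then descent) more explicitly.
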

\begin{proof}
By Corollary~\ref{c:equivariant} the Bayer--Sturmfels functor
\cite[Corollary~3.3]{bayer-sturmfels1998} applies.
\end{proof}

\begin{remark}\label{r:sylvan}
Unwinding the definitions leading to Theorem~\ref{t:sylvan} helps
exhibit the explicit nature of the minimal resolutions it constructs.
For $\aalpha \preceq \bbeta \in \ZZ^n/L$, choose coset representatives
$\aa \preceq \bb \in \ZZ^n$.  To $\aalpha$ and~$\bbeta$ are associated
Koszul simplicial complexes $K^\aalpha L = K^\aa M_L$ and $K^\bbeta L
=\nolinebreak K^\bb M_L$ on~$\{1,\dots,n\}$.  Any sylvan resolution
of~$I_L$, be it canonical or noncanonical, is expressed by specifying
$\ZZ^n/L$-graded homomorphisms
$$
  \HH_{i-1} K^\aalpha L \otimes_\kk R(-\aalpha)
  \,\from\,
  \HH_i K^\bbeta L \otimes_\kk R(-\bbeta).
$$
These are induced by sylvan homology morphisms $\HH_{i-1} K^\aalpha L
\from \HH_i K^\bbeta L$ which are explicitly enacted on (Koszul
simplicial) cycles by sylvan morphisms
$$
  \wt C_{i-1} K^\aalpha L
  \ \stackrel{\ D^{\aalpha\bbeta}}\filleftmap\
  \wt C_i K^\bbeta L.
$$
Each $D^{\aalpha\bbeta}$ is given by its \emph{sylvan matrix}, whose
rows and columns are indexed by faces (i.e., by subsets of
$\{1,\dots,n\}$).  To be completely precise requires notions from
\cite{sylvan}, which we use henceforth without further comment.
\begin{enumerate}
\item%
In the canonical sylvan free resolution of~$I_L$
the entry of $D_{\sigma\tau}$ indexed by faces $\sigma \in
K_{i-1}^\aalpha L$ and $\tau \in\nolinebreak K_i^\bbeta L$ is a
normalized sum of the weights~$w_\phi$ of all chain-link fences~$\phi$
from~$\tau$ to~$\sigma$ along all saturated decreasing lattice
paths~$\lambda$ from~$\bb$~to~$\aa$:
$$
  D_{\sigma\tau}
  =
  \sum_{\lambda \in \Lambda(\aa,\bb)}
  \frac{1}{\Delta_{i,\lambda} M_L}
  \sum_{\phi \in \Phi_{\sigma\tau}(\lambda)}
  w_\phi.
$$

\item%
A noncanonical sylvan free resolution of~$I_L$ requires a choice of
community (Definition~\ref{d:hedge}.\ref{i:community}) in the Koszul
simplicial complex~$K^\aalpha L$ for each coset $\aalpha \in \ZZ^n/L$.
The entry of $D_{\sigma\tau}$ indexed by faces $\sigma \in
K_{i-1}^\aalpha L$ and $\tau \in\nolinebreak K_i^\bbeta L$ is the sum,
over all saturated decreasing lattice paths~$\lambda$ from~$\bb$
to~$\aa$, of the weights~$w^\kk_\phi$ over~$\kk$ of all chain-link
fences~$\phi$ from~$\tau$ to~$\sigma$ that are subordinate to the
hedgerow~$ST_i^\lambda$ derived from the communities along~$\lambda$:
$$
  D_{\sigma\tau}
  =
  \sum_{\lambda \in \Lambda(\aa,\bb)}\
  \sum_{\substack{\phi \in \Phi_{\sigma\tau}(\lambda)\\\phi\vdash ST_i^\lambda}}
  w^\kk_\phi.
$$
\end{enumerate}
\end{remark}
\begin{excise}{%
  \begin{example}
  The lattice $L$ that is the kernel of matrix $[4\,\,3\,\,5]$ has
  lattice ideal $I_L = \<xy^2 - z^2, xz - y^3, yz - x^2\> \subseteq S =
  \kk[x, y, z]$.  The corresponding lattice module is $M_L = \<x^a y^b
  z^c\mid 4a + 3b + 5c = 0\> \subseteq \kk[x^{\pm 1}, y^{\pm 1}, z^{\pm
  1}]$.  Note that $M_L$ is a $\ZZ^3$-graded $S[L]$-submodule of
  $\kk[x^{\pm 1}, y^{\pm 1}, z^{\pm 1}]$.  The hull resolution of~$M_L$
  is constructed in \cite[Example~9.21]{cca}, which also contains a
  picture of~$M_L$.
  \\[2ex]
  \comment{EM: I will consider putting a version of this picture in here
  if this example is modified to warrant it.  As it is, the example
  lists a bunch of sylvan matrices but doesn't illustrate how
  Theorem~\ref{t:sylvan} works.  To be honest, I'm a little confused as
  to why there are $u,v,w$ variables in some of the sylvan data (see my
  response to \#1 in my email dated Sep 16); that has to be explained.
  Taking a broader view, seeing all of the sylvan matrices may be less
  informative than seeing a few sample sylvan computations on
  $L$-equivalent pairs of degrees which collated yield a selected one of
  the depicted equivariant sylvan matrices.  For example, there are
  paths from 120 to 000 and also paths from 120 to 12$\{$-2$\}$, which
  presumably give rise to the top two sylvan matrices; how does the
  tensor product with~$R$ over $R[L]$ in Theorem~\ref{t:sylvan} combine
  these two?  What about the paths from $\{$-1$\}$31 to $\{$-2$\}$11?
  It might be worth including the brass-tacks hedge and fence
  computations that lead to the entries of the sylvan matrices in these
  sample computations---say, one for $\phi_1$ and one for~$\phi_2$.
  After these lower-level details are carried out in sample situations,
  it might then be worth listing all of the sylvan matrices, as is
  currently done.  It could also be useful to show readers how to notate
  the quotient resolution of~$I_L$ in a sylvan kind of way, if that's
  possible.}
  \\[2ex]
  Our computation of sylvan morphisms is based on the explicit formulas
  for three variables case in [add citation].
  Writing down the resolution requires specifying sylvan data for the
  homomorphisms
  $$
  \begin{array}{@{}c@{\,}}
     \CC_{{\!-\!}1} K^{000} 
  \end{array}
  \xleftarrow{\phi_1}
  {\begin{array}{@{}c@{\,}}
     \CC_{\!0} K^{120} 
  \\ \oplus
  \\ \CC_{\!0} K^{101} 
  \\ \oplus
  \\ \CC_{\!0} K^{011}  
  \end{array}
  }
  \xleftarrow{\phi_2}
  {\begin{array}{@{}c@{\,}}
  \CC_{1}K^{130} 
  \\ \oplus
  \\ \CC_1 K^{102} 
  \end{array}}
  $$
  The sylvan morphisms for $\phi_1$ arise from six pairs of Koszul
  simplicial complexes.  For notation, the variables $u$, $v$, and~$w$
  are utilized to express monomials in~$\ZZ[L]$.
  \begin{align*}
  \CC_{{\!-\!}1} K^{000} \otimes \<1\>
  &
  \xleftarrow{
    \monomialmatrix
      {\nothing}
      {\scriptstyle
       \begin{array}{@{\,}c@{\ \,}c@{\ \,}c@{\,}}
       x & y & z
       \\
       1 & 1 & 0
       \end{array}
      }
      {\\}
  }
  \CC_0 K^{120} \otimes \<xy^2\>
  \\
  \CC_{{\!-\!}1} K^{000} \otimes \<1\>
  &
  \xleftarrow{
    \monomialmatrix
      {\nothing}
      {\scriptstyle
       \begin{array}{@{\,}c@{\ \,}c@{\ \,}c@{\,}}
       x & y & z
       \\
       0 & 0 & 1
       \end{array}
      }
      {\\}
  }
  \CC_0 K^{120} \otimes \<z^2uv^2w^{-2}\>
  \\
  \CC_{{\!-\!}1} K^{000} \otimes \<1\>
  &
  \xleftarrow{
    \monomialmatrix
      {\nothing}
      {\scriptstyle
       \begin{array}{@{\,}c@{\ \,}c@{\ \,}c@{\,}}
       x & y & z
       \\
       1 & 0 & 1
       \end{array}
      }
      {\\}
  }
  \CC_0 K^{101} \otimes \<xz\>
  \\
  \CC_{{\!-\!}1} K^{000} \otimes \<1\>
  &
  \xleftarrow{
    \monomialmatrix
      {\nothing}
      {\scriptstyle
       \begin{array}{@{\,}c@{\ \,}c@{\ \,}c@{\,}}
       x & y & z
       \\
       0 & 1 & 0
       \end{array}
      }
      {\\}
  }
  \CC_0 K^{101} \otimes \<y^3uv^{-3}w\>
  \\
  \CC_{{\!-\!}1} K^{000} \otimes \<1\>
  &
  \xleftarrow{
    \monomialmatrix
      {\nothing}
      {\scriptstyle
       \begin{array}{@{\,}c@{\ \,}c@{\ \,}c@{\,}}
       x & y & z
       \\
       0 & 1 & 1
       \end{array}
      }
      {\\}
  }
  \CC_0 K^{011} \otimes \<yz\>
  \\
  \CC_{{\!-\!}1} K^{000} \otimes \<1\>
  &
  \xleftarrow{
    \monomialmatrix
      {\nothing}
      {\scriptstyle
       \begin{array}{@{\,}c@{\ \,}c@{\ \,}c@{\,}}
       x & y & z
       \\
       1 & 0 & 0
       \end{array}
      }
      {\\}
  }
  \CC_0 K^{011} \otimes \<x^2u^{-2}vw\>
  \end{align*}
  The sylvan morphisms for $\phi_2$ arise from six pairs of Koszul
  simplicial complexes.
  \begin{align*}
  \CC_0 K^{120} \otimes \<xy^2\>
  &
  \xleftarrow{
    \monomialmatrix
      {x\\ y\\ z}
      {\begin{array}{@{\ }c@{\ \,}r@{\ \,}r@{\,}}
           zy &        yx        &        xz\,
         \\
            0 & \nicefrac{ 1}{3} & \nicefrac{-1}{3}\,
         \\ 0 & \nicefrac{-2}{3} & \nicefrac{-1}{3}\,
         \\ 0 & \nicefrac{ 1}{3} & \nicefrac{ 2}{3}\,
       \end{array}
      }
      {\\\\\\}
  }
  \CC_1 K^{102} \otimes \<x^2y^2u^{-1}v^{-2}w^2\>
  \\
  \CC_0 K^{101} \otimes \<xz\>
  &
  \xleftarrow{
    \monomialmatrix
     {x\\ y\\ z}
      {\begin{array}{@{}r@{\ \ \;}c@{\ }r@{\,}}
                  zy        &yx &        xz\,
       \\\,\nicefrac{-1}{3} & 0 & \nicefrac{-2}{3}\,
       \\\,\nicefrac{ 2}{3} & 0 & \nicefrac{ 1}{3}\,
       \\\,\nicefrac{-1}{3} & 0 & \nicefrac{ 1}{3}\,
       \end{array}
      }
      {\\\\\\}
  }
  \CC_1 K^{102} \otimes \<xz^2\>
  \\
  \CC_0 K^{011} \otimes \<x^2 u^{-2}vw\>
  &
  \xleftarrow{
    \monomialmatrix
     {x\\ y\\ z}
      {\begin{array}{@{}r@{\ \,}r@{\ \ \,}c@{\,}}
                  zy        &        yx        &xz\,
       \\\,\nicefrac{ 4}{9} & \nicefrac{ 5}{9} & 0\,
       \\\,\nicefrac{ 1}{9} & \nicefrac{-1}{9} & 0\,
       \\\,\nicefrac{-5}{9} & \nicefrac{-4}{9} & 0\,
       \end{array}
      }
      {\\\\\\}
  }
  \CC_1 K^{102} \otimes \<x^2y^2u^{-1}v^{-2}w^2\>
  \\
  \CC_0 K^{120} \otimes \<xy^2\>
  &
  \xleftarrow{
    \monomialmatrix
     {x\\ y\\ z}
      {\begin{array}{@{}r@{\ \,}r@{\ \ \,}c@{\,}}
                  zy        &        yx        &xz\,
       \\\,\nicefrac{ 1}{3} & \nicefrac{ 2}{3} & 0\,
       \\\,\nicefrac{ 1}{3} & \nicefrac{-1}{3} & 0\,
       \\\,\nicefrac{-2}{3} & \nicefrac{-1}{3} & 0\,
       \end{array}
      }
      {\\\\\\}
  }
  \CC_1 K^{130} \otimes \<xy^3\>
  \\
  \CC_0 K^{101} \otimes \<y^3uv^{-3}w\>
  &
  \xleftarrow{
    \monomialmatrix
     {x\\ y\\ z}
      {\begin{array}{@{\ }c@{\ \,}r@{\ \,}r@{\,}}
          zy &        yx        &        xz\,
       \\\,0 & \nicefrac{ 1}{3} & \nicefrac{-1}{3}\,
       \\\,0 & \nicefrac{-2}{3} & \nicefrac{-1}{3}\,
       \\\,0 & \nicefrac{ 1}{3} & \nicefrac{ 2}{3}\,
       \end{array}
      }
      {\\\\\\}
  }
  \CC_1 K^{130} \otimes \<xy^3\>
  \\
  \CC_0 K^{011} \otimes \<yz\>
  &
  \xleftarrow{
    \monomialmatrix
     {x\\ y\\ z}
      {\begin{array}{@{}r@{\ \ \;}c@{\ }r@{\,}}
                  zy        &yx &        xz\,
       \\\,\nicefrac{-1}{3} & 0 & \nicefrac{-2}{3}\,
       \\\,\nicefrac{ 2}{3} & 0 & \nicefrac{ 1}{3}\,
       \\\,\nicefrac{-1}{3} & 0 & \nicefrac{ 1}{3}\,
       \end{array}
      }
      {\\\\\\}
  }
  \CC_1 K^{130} \otimes \<yz^2uv^2w^{-2}\>
  \end{align*}
  \end{example}
}\end{excise}%
%

\section{Extended example}\label{s:example}

The lattice $L$ that is the kernel of the matrix $[4\,\,3\,\,5]$ has
lattice ideal
$$
  I_L = \<xy^2 - z^2, xz - y^3, yz - x^2\> \subseteq R = \kk[x, y, z].
$$
The corresponding lattice module is
$$
  M_L
  =
  \<x^a y^b z^c\mid 4a + 3b + 5c = 0\>
  \subseteq
  \kk[x^{\pm 1}, y^{\pm 1}, z^{\pm 1}].
$$
Note that $M_L$ is a $\ZZ^3$-graded $R[L]$-submodule of $\kk[x^{\pm
1}, y^{\pm 1}, z^{\pm 1}]$.  The hull resolution of~$M_L$
is constructed in \cite[Example~9.21]{cca}, which also contains a
picture of~$M_L$.

For the moment, treat $M_L$ as an $R$-module with generators
$\{x^{l_1}y^{l_2}z^{l_3} \mid (l_1,l_2,l_3) \in\nolinebreak L\}$.  The
first goal is to list all possible sylvan morphisms between Koszul
simplicial complexes up to translation by vectors in~$L$.  Our
computation of sylvan morphisms is based on the explicit formulas for
three variables in \cite[Theorem~3.1 and Theorem~4.1]{ordog2020}.
There are six different nonzero sylvan morphisms from $\CC_0$ to
$\CC_{-1}$:
\begin{align*}
\quad\
\CC_{{\!-\!}1} K^{000} \otimes \<1\>
&\mkl{\hspace{-26ex}(a)}
\xleftarrow{
  \monomialmatrix
    {\nothing}
    {\scriptstyle
     \begin{array}{@{\,}c@{\ \,}c@{\ \,}c@{\,}}
     x & y & z
     \\
     1 & 1 & 0
     \end{array}
    }
    {\\}
}
\CC_0 K^{120} \otimes \<xy^2\>
\quad
\smash{
  \begin{array}[t]{@{}c@{}}
  \\[-7ex]
    \psfrag{a}{\tiny$\blu a$}
    \psfrag{b}{\tiny$\blu b$}
    \includegraphics[width=27ex]{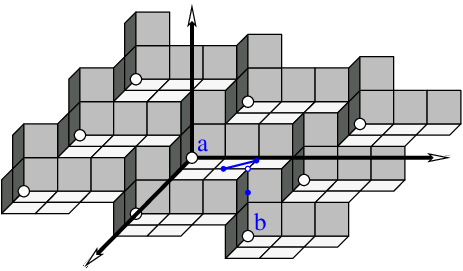}
  \end{array}
}
\\[-.5ex]
\quad\
\CC_{{\!-\!}1} K^{12\{-2\}} \otimes \<xyz^{-2}\>
&\mkl{\hspace{-26ex}(b)}
\xleftarrow{
  \monomialmatrix
    {\nothing}
    {\scriptstyle
     \begin{array}{@{\,}c@{\ \,}c@{\ \,}c@{\,}}
     x & y & z
     \\
     0 & 0 & 1
     \end{array}
    }
    {\\}
}
\CC_0 K^{120} \otimes \<xy^2\>
\\[-.5ex]
\quad\
\CC_{{\!-\!}1} K^{000} \otimes \<1\>
&\mkl{\hspace{-26ex}(c)}
\xleftarrow{
  \monomialmatrix
    {\nothing}
    {\scriptstyle
     \begin{array}{@{\,}c@{\ \,}c@{\ \,}c@{\,}}
     x & y & z
     \\
     1 & 0 & 1
     \end{array}
    }
    {\\}
}
\CC_0 K^{101} \otimes \<xz\>
\quad
\smash{
  \begin{array}[t]{@{}c@{}}
  \\[-7ex]
    \psfrag{a}{\tiny$\blu c$}
    \psfrag{b}{\tiny$\blu d$}
    \includegraphics[width=27ex]{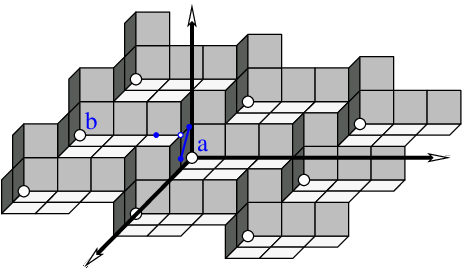}
  \end{array}
}
\\[-.5ex]
\quad\
\CC_{{\!-\!}1} K^{1\{-3\}1} \otimes \<xy^{-3}z\>
&\mkl{\hspace{-26ex}(d)}
\xleftarrow{
  \monomialmatrix
    {\nothing}
    {\scriptstyle
     \begin{array}{@{\,}c@{\ \,}c@{\ \,}c@{\,}}
     x & y & z
     \\
     0 & 1 & 0
     \end{array}
    }
    {\\}
}
\CC_0 K^{101} \otimes \<xz\>
\\[-.5ex]
\quad\
\CC_{{\!-\!}1} K^{000} \otimes \<1\>
&\mkl{\hspace{-26ex}(e)}
\xleftarrow{
  \monomialmatrix
    {\nothing}
    {\scriptstyle
     \begin{array}{@{\,}c@{\ \,}c@{\ \,}c@{\,}}
     x & y & z
     \\
     0 & 1 & 1
     \end{array}
    }
    {\\}
}
\CC_0 K^{011} \otimes \<yz\>
\quad
\smash{
  \begin{array}[t]{@{}c@{}}
  \\[-7ex]
    \psfrag{a}{\tiny$\blu e$}
    \psfrag{b}{\tiny$\blu f$}
    \includegraphics[width=27ex]{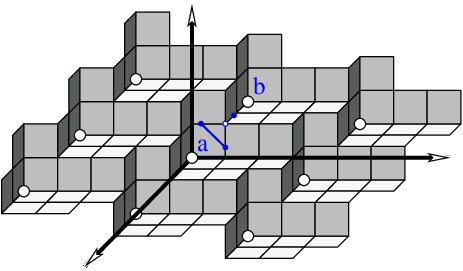}
  \end{array}
}
\\[-.5ex]
\quad\
\CC_{{\!-\!}1} K^{\{-2\}11} \otimes \<x^{-2}yz\>
&\mkl{\hspace{-26ex}(f)}
\xleftarrow{
  \monomialmatrix
    {\nothing}
    {\scriptstyle
     \begin{array}{@{\,}c@{\ \,}c@{\ \,}c@{\,}}
     x & y & z
     \\
     1 & 0 & 0
     \end{array}
    }
    {\\}
}
\CC_0 K^{011} \otimes \<yz\>.
\end{align*}
There are also six different nonzero sylvan morphisms from $\CC_1$
to~$\CC_0$:
\begin{align*}
\CC_0 K^{002} \otimes \<z^2\>
&\mkl{\hspace{-25.5ex}(a)}
\xleftarrow{
  \monomialmatrix
   {x\\ y\\ z}
    {\begin{array}{@{}r@{\ \,}r@{\ \ \,}c@{\,}}
                zy        &        yx        &xz\,
     \\\,\nicefrac{ 1}{3} & \nicefrac{ 2}{3} & 0\,
     \\\,\nicefrac{ 1}{3} & \nicefrac{-1}{3} & 0\,
     \\\,\nicefrac{-2}{3} & \nicefrac{-1}{3} & 0\,
     \end{array}
    }
    {\\\\\\}
}
\CC_1 K^{012} \otimes \<yz^2\>
\\
\qquad
\CC_0 K^{\{-1\}12} \otimes \<x^{-1}yz^2\>
&\mkl{\hspace{-25.5ex}(b)}
\xleftarrow{
  \monomialmatrix
   {x\\ y\\ z}
    {\begin{array}{@{\ }c@{\ \,}r@{\ \,}r@{\,}}
        zy &        yx        &        xz\,
     \\\,0 & \nicefrac{ 1}{3} & \nicefrac{-1}{3}\,
     \\\,0 & \nicefrac{-2}{3} & \nicefrac{-1}{3}\,
     \\\,0 & \nicefrac{ 1}{3} & \nicefrac{ 2}{3}\,
     \end{array}
    }
    {\\\\\\}
}
\CC_1 K^{012} \otimes \<yz^2\>
\hspace{-3.75ex}
\smash{
  \begin{array}[t]{@{}c@{}}
  \\[-18ex]
    \psfrag{a}{\tiny$\red a$}
    \psfrag{b}{\tiny$\red b$}
    \psfrag{c}{\tiny$\red c$}
    \includegraphics[width=27ex]{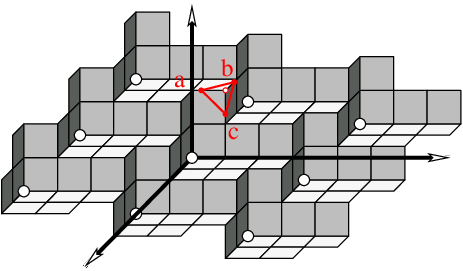}
  \end{array}
}
\\
\CC_0 K^{011} \otimes \<yz\>
&\mkl{\hspace{-25.5ex}(c)}
\xleftarrow{
  \monomialmatrix
   {x\\ y\\ z}
    {\begin{array}{@{}r@{\ \ \;}c@{\ }r@{\,}}
                zy        &yx &        xz\,
     \\\,\nicefrac{-1}{3} & 0 & \nicefrac{-2}{3}\,
     \\\,\nicefrac{ 2}{3} & 0 & \nicefrac{ 1}{3}\,
     \\\,\nicefrac{-1}{3} & 0 & \nicefrac{ 1}{3}\,
     \end{array}
    }
    {\\\\\\}
}
\CC_1 K^{012} \otimes \<yz^2\>
\\
\CC_0 K^{011} \otimes \<yz\>
&\mkl{\hspace{-26ex}(d)}
\xleftarrow{
  \monomialmatrix
    {x\\ y\\ z}
    {\begin{array}{@{\ }c@{\ \,}r@{\ \,}r@{\,}}
         zy &        yx        &        xz\,
       \\
          0 & \nicefrac{ 1}{3} & \nicefrac{-1}{3}\,
       \\ 0 & \nicefrac{-2}{3} & \nicefrac{-1}{3}\,
       \\ 0 & \nicefrac{ 1}{3} & \nicefrac{ 2}{3}\,
     \end{array}
    }
    {\\\\\\}
}
\CC_1 K^{031} \otimes \<y^3z\>
\\
\CC_0 K^{\{-1\}31} \otimes \<x^{-1}y^3z\>
&\mkl{\hspace{-26ex}(e)}
\xleftarrow{
  \monomialmatrix
   {x\\ y\\ z}
    {\begin{array}{@{}r@{\ \ \;}c@{\ }r@{\,}}
                zy        &yx &        xz\,
     \\\,\nicefrac{-1}{3} & 0 & \nicefrac{-2}{3}\,
     \\\,\nicefrac{ 2}{3} & 0 & \nicefrac{ 1}{3}\,
     \\\,\nicefrac{-1}{3} & 0 & \nicefrac{ 1}{3}\,
     \end{array}
    }
    {\\\\\\}
}
\CC_1 K^{031} \otimes \<y^3z\>
\hspace{-3.75ex}
\smash{
  \begin{array}[t]{@{}c@{}}
  \\[-18ex]
    \psfrag{a}{\tiny$\red d$}
    \psfrag{b}{\tiny$\red e$}
    \psfrag{c}{\tiny$\red f$}
    \includegraphics[width=27ex]{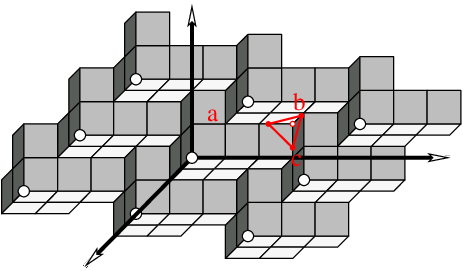}
  \end{array}
}
\\
\CC_0 K^{030} \otimes \<y^3\>
&\mkl{\hspace{-26ex}(f)}
\xleftarrow{
  \monomialmatrix
   {x\\ y\\ z}
    {\begin{array}{@{}r@{\ \,}r@{\ \ \,}c@{\,}}
                zy        &        yx        &xz\,
     \\\,\nicefrac{ 4}{9} & \nicefrac{ 5}{9} & 0\,
     \\\,\nicefrac{ 1}{9} & \nicefrac{-1}{9} & 0\,
     \\\,\nicefrac{-5}{9} & \nicefrac{-4}{9} & 0\,
     \end{array}
    }
    {\\\\\\}
}
\CC_1 K^{031} \otimes \<y^3z\>.
\end{align*}
In the staircase diagrams, the relevant Koszul simplicial complexes
are drawn (in color, where available), with the source degree at the
center of the simplicial complex and the target degrees labeled with
letters corresponding to the order in which the sylvan morphisms are
listed.  In the first list of six, each Koszul simplicial complex
accounts for two sylvan morphisms that have the same source degree and
different target degrees.  In the second list of six, each Koszul
simplicial complex accounts for three sylvan morphisms that have the
same source degree and different target degrees.

All other nonzero sylvan morphisms are translations by lattice points
in~$L$ of these twelve sylvan morphisms.  By
Proposition~\ref{p:equivariant}, all nonzero sylvan matrices are
identical to one of these twelve matrices, the difference residing
only in their $\ZZ^3$-graded degrees.  For instance, the sylvan
morphism
\begin{align*}
  \CC_0K^{13\{-1\}} &\from \CC_1 K^{130}
\intertext{agrees with the sylvan morphism~(c) in the second group of
six, namely}
     \CC_0K^{011}   &\from \CC_1 K^{012}
\end{align*}
from $\CC_1$ to~$\CC_0$, since $(1,3,0) - (0,1,2) = (1,3,-1) - (0,1,1)
= (1,2,-2) \in L$:
$$
\psfrag{a}{}
\psfrag{b}{}
\psfrag{c}{}
\psfrag{L}{\tiny$\in\!L$}
\includegraphics[width=38.5ex]{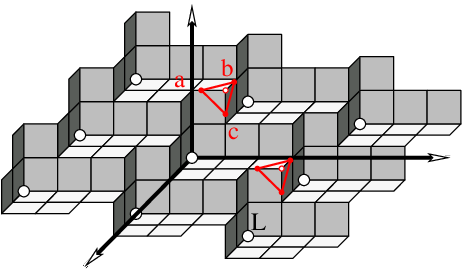}
$$

The $L$-invariance of these sylvan morphisms implies that the sylvan
resolution of~$M_L$ as an $R$-module is endowed with the additional
structure of an $R[L]$-free resolution of~$M_L$.  Picking a
representative for each relevant coset of~$L$ in~$\ZZ^3/L$ yields the
resolution
$$
\begin{array}{@{}c@{\,}}
   \CC_{{\!-\!}1} K^{000} 
\end{array}
\xleftarrow{\phi_1}
{\begin{array}{@{}c@{\,}}
   \CC_{\!0} K^{002} 
\\ \oplus
\\ \CC_{\!0} K^{\{-1\}12} 
\\ \oplus
\\ \CC_{\!0} K^{011}  
\end{array}
}
\xleftarrow{\phi_2}
{\begin{array}{@{}c@{\,}}
\CC_1 K^{012} 
\\ \oplus
\\ \CC_1 K^{031} 
\end{array}}
$$
of~$M_L$ over~$R[L]$, where $\phi_1$ and~$\phi_2$ conglomerate the
previous sylvan data.

Why conglomeration?  Suppose that $\alpha$ and~$\beta$ are the source
and target lattice points of a sylvan morphism for~$M_L$.  If $\alpha'
\in \alpha + L$ and $\beta$ are the source and target of another
sylvan morphism for~$M_L$, with target degree $\alpha' \neq \alpha$
but the same source degree~$\beta$, then the relevant $R$-free modules
generated in degrees $\alpha$ and~$\alpha'$ are independent of one
another, but modulo the action of~$L$ they become identified.  For
instance, the two sylvan morphisms (a) and~(b) from $\CC_0$
to~$\CC_{-1}$ in the first group of six have the same source degree
$\beta = 120$ but different target degrees $\alpha = 000$ and $\alpha'
= 12\{-2\}$ that become identified modulo~$L$ because $\alpha' - \alpha
= \alpha' \in L$.  (It is worth checking this in the staircase
diagram.)  Translating this setup by~$L$, the same situation can be
viewed as letting $\alpha$ remain the same while taking $\beta \neq
\beta' \in \beta + L$.

Using variables $u,v,w$ for the Laurent $L$-monomials in~$R[L]$
corresponding to
$x,y,z$ in~$R$, the lattice module $M_L$ is isomorphic, as an
$R[L]$-module, to the quotient
$$
  R[L]\Big/\!
    \Bigl\<
    x^{a_1}y^{a_2}z^{a_3}-x^{b_1}y^{b_2}z^{b_3}u^{a_1-b_1}v^{a_2-b_2}w^{a_3-b_3}
    \,\bigg|\,
    \begin{array}{@{}l}
      (a_1,a_2,a_3)\;,\,\ (b_1,b_2,b_3)\in \NN^3 \text{ and}\!\!
    \\(a_1,a_2,a_3)-(b_1,b_2,b_3)\in L
    \end{array}
    \Bigr\>.
$$

The homological degree~$0$ syzygy module
$$
  \image(\phi_1)
  =
  \Bigl\<
    \frac{w^2}{uv^2}xy^2-z^2,
    \frac{vw}{u^2}xz-\frac{w^2}{uv^2}y^3,
    yz-\frac{vw}{u^2}x^2
  \Bigr\>
$$
can be read from the first group of six sylvan morphisms of the $R$-module~$M_L$.
For instance, we consider the sylvan morphsims from $\beta= 002$ to $\alpha = 000$ and $\alpha^\prime  = \{-1\}\{-1\}2$ which are the translations of the first two morphisms $\CC_{-1} \from \CC_0$ in the list. We pick a generator for $\HH_0 K^{002}$ and it can be either $x-z$ or $y-z$. Here $x,y,z$ are the $0$-faces of the Koszul complex not the variables in the polynomial ring. Two sylvan morphisms are listed here and we look at the images of the $x-z = [1\,\,0\,\,1]^T$ in each morphism\hfill
$$
\CC_{{\!-\!}1} K^{000} \otimes \<1\>
\xleftarrow{
 \times \frac{w^2}{uv^2}
}
\CC_{{\!-\!}1} K^{\{-1\}\{-2\}2} \otimes \<\frac{w^2}{uv^2}\>
\xleftarrow{
  \monomialmatrix
    {\nothing}
    {\scriptstyle
     \begin{array}{@{\,}c@{\ \,}c@{\ \,}c@{\,}}
     x & y & z
     \\
     1 & 1 & 0
     \end{array}
    }
    {\\}
}
\CC_0 K^{002} \otimes \<\frac{w^2}{uv^2}xy^2\>
$$
\[
\nothing\otimes \frac{w^2}{uv^2}xy^2 \reflectbox{$\xmapsto{\hspace{2cm}}$} \nothing\otimes xy^2\cdot \frac{w^2}{uv^2}\reflectbox{$\xmapsto{\hspace{2.5cm}}$} (x-z)\otimes \frac{w^2}{uv^2}xy^2
\]

$$
\CC_{{\!-\!}1} K^{000} \otimes \<1\>
\xleftarrow{
  \monomialmatrix
    {\nothing}
    {\scriptstyle
     \begin{array}{@{\,}c@{\ \,}c@{\ \,}c@{\,}}
     x & y & z
     \\
     0 & 0 & 1
     \end{array}
    }
    {\\}
}
\CC_0 K^{002} \otimes \<z^2\>
$$
\[
-\nothing\otimes z^2 \reflectbox{$\xmapsto{\hspace{2.5cm}}$} (x-z)\otimes z^2
\]
which correspond to the first generator of syzygy module.
Applying the functor~$\pi_L$ from $\ZZ^3$-graded $R[L]$-modules to
$\ZZ^3/L$-graded $R$-modules \cite[Corollary~3.3]{bayer-sturmfels1998}
(see also \cite[Theorem~9.17]{cca}) yields the resolution of
the lattice ideal~$I_L$ constructed in \cite[Example~9.21]{cca}. We write down the resolutions of $M_L$ as $R[L]$-module
$$
0
\leftarrow
R[L]
\xleftarrow{
  \monomialmatrix
    {}
    {
     \begin{array}{@{\,}c@{\ \,}c@{\ \,}c@{\,}}
     \,\, & \,\, & \,\,
     \\
     \frac{w^2}{uv^2}xy^2-z^2 & \frac{vw}{u^2}xz-\frac{w^2}{uv^2}y^3 & yz-\frac{vw}{u^2}x^2
     \end{array}
    }
    {\\}
}
R[L]^3
\xleftarrow{
  \monomialmatrix
   {}
    {
     \begin{array}{@{\ \ \,}r@{\ \,}c@{\ \,}c@{}}
                \,\,        &        \,\,        &\,\,
     \\y & \frac{xvw}{u^2}
     \\x & z
     \\z & \frac{y^2w^2}{uv^2}
     \end{array}
    }
    {\\\\\\}
}
R[L]^2
\leftarrow
0
$$
and resolution of $R/I_L$ as $R$-module
$$
0
\leftarrow
R
\xleftarrow{
  \monomialmatrix
    {}
    {
     \begin{array}{@{\,}c@{\ \,}c@{\ \,}c@{\,}}
     \,\, & \,\, & \,\,
     \\
     xy^2-z^2 & xz-y^3 & yz-x^2
     \end{array}
    }
    {\\}
}
R^3
\xleftarrow{
  \monomialmatrix
   {}
    {
     \begin{array}{@{\ \ \,}r@{\ \,}c@{\ \,}c@{}}
                \,\,        &        \,\,        &\,\,
     \\y & x
     \\x & z
     \\z & y^2
     \end{array}
    }
    {\\\\\\}
}
R^2
\leftarrow
0.
$$
\begin{remark}\label{r:any-equivariant}
The fact that the $L$-equivariant free resolution of~$M_L$ in this
section---and indeed, in Theorem~\ref{t:sylvan}---is sylvan is largely
irrelevant to the main point of the paper: any canonical minimal
$R$-free $\ZZ^n$-graded resolution of the lattice module~$M_L$ is
automatically $L$-equivariant, so the quotient functor of Bayer and
Sturmfels \cite{bayer-sturmfels1998} yields a similarly canonical
$\ZZ^n/L$-graded minimal $R$-free resolution of the lattice
ideal~$I_L$.
\end{remark}


\end{document}